\newcommand{\nuke}{\mathcal{N}}
\newcommand{\img}[1]{\mathop{\mathrm{IMG}}\left(#1\right)}
\newcommand{\arr}{\rightarrow}
\newcommand{\alb}{\mathsf{X}}
\newcommand{\xs}{\alb^*}
\newcommand{\xo}{\alb^\omega}
\newcommand{\autxs}{\mathop{\mathrm{Aut}}(\xs)}
\newcommand{\C}{\mathbb{C}}
\newcommand{\gw}{\mathcal{D}}
\newcommand{\A}{\mathrm{A}}
\newcommand{\B}{\mathrm{B}}
\newcommand{\G}{\Gamma}
\newcommand{\wt}[1]{\widetilde{#1}}
\newcommand{\M}{\mathcal{M}}
\newcommand{\rist}{\mathsf{RiSt}}
\newtheorem{theorem}{Theorem}[section]
\newtheorem{proposition}[theorem]{Proposition}
\newtheorem{corollary}[theorem]{Corollary}
\newtheorem{lemma}[theorem]{Lemma}
\theoremstyle{definition}
\newtheorem{defi}{Definition}
\title[An uncountable family with isomorphic profinite completions]{An
  uncountable family of 3-generated groups with isomorphic profinite completions}
\author{Volodymyr Nekrashevych}
\begin{document}

\begin{abstract}
We construct an uncountable family of 3-generated residually finite
just-infinite groups with isomorphic profinite completions. We also
show that word growth rate is not a profinite property.
\end{abstract}

\maketitle

\section{Introduction}

Profinite completion $\widehat G$ of a group $G$ is the limit
of the inverse system of all finite quotients $G/N$
with respect to canonical epimorphisms
$G/N_1\arr G/N_2$ induced by inclusions $N_1\subseteq N_2$.

If $K$ is the intersection
of all normal subgroups of finite index of $G$, then $\widehat{G/H}$
is naturally isomorphic to $\widehat G$. Therefore, we may restrict
ourselves to \emph{residually finite} groups, i.e., assume that intersection of
finite index normal subgroups of $G$ is trivial.

To what extend does the profinite completion
$\widehat G$ determine the structure of $G$? Which group-theoretic properties are
preserved if we replace a residually finite
group by a residually finite group with the same profinite completion?
(Such properties are said to be \emph{profinite}.)

One of motivations of these question comes from the
paper~\cite{grothendieck:profinite} of A.~Grothendieck, where the
following questions was asked in relation with representation theory
of groups. Let $u:G\arr H$ be a homomorphism of finitely presented
residually finite groups such that the induced homomorphism $\widehat
u:\widehat G\arr\widehat H$ is an isomorphism. Is $u$ an isomorphism?
This question was negatively answered by M.~Bridson and F.~Grunewald
in~\cite{bridsongrunewald:groth}. Finitely generated examples (without
the condition of being finitely presented) were constructed before
in~\cite{platonovtavgen,basslubotzky:nonarithmetic,pyber:intermediate}.

The ``flexibility'' of a group $G$ in the sense of its relation to the
profinite completion is formalized by the notion of its
\emph{genus}. It is known that two
finitely generated groups have isomorphic completions if and only if
the sets of their finite quotients are equal,
see~\cite[Corollary~3.2.8]{ribeszalesskii:book}.
The \emph{genus} of a group $G$ is the set of isomorphism classes of
residually finite finitely generated groups $H$ (or groups $H$ belonging
to some other fixed class) such that the sets of
isomorphism classes of finite quotients of $G$ and $H$ are equal,
see~\cite{pickel:nilpotent,grunewaldzalesski}. Groups whose genus contains
only one element are completely determined by their profinite
completion. Groups that have finite genus also can be considered as
``rigid''.

It was shown by P.~F.~Pickel that genus of a virtually nilpotent finite
generated group is always finite~\cite{pickel:virtnilpotent}. Later it was shown by
F.~J.~Grunewald, P.~F.~Pickel, and D.~Segal~\cite{gps:polyciclic} that the same
result holds for virtually polyciclic groups. See a survey of similar
rigidity results in~\cite{grunewaldzalesski}.

Examples of infinite (countable) genera for metabelian groups
was given by P.~F.~Pickel in~\cite{pickel:metabelian}.

L.~Pyber constructed examples of groups of uncountable genus in~\cite{pyber:intermediate} using direct products of alternating groups.

In our note we construct a new family of finitely generated
groups of uncountable genus. Namely, we construct an uncountable
family of 3-generated groups $\gw_w$, $w\in\{0, 1\}^\infty$
with the following properties.
\begin{enumerate}
\item The isomorphism classes in the family $\{\gw_w\}$ are countable,
  hence there are uncountably many different isomorphism classes among
  the groups $\gw_w$. 
\item Each group $\gw_w$ is residually finite, and every proper
  quotient of $\gw_w$ is a finite 2-group.
\item The profinite completions of $\gw_w$ are pairwise isomorphic for
  all sequences $w\in\{0, 1\}^\infty$ that have infinitely many zeros.
\item There are uncountably many different word-growth types among the
  groups $\gw_w$.
\end{enumerate}

The family $\{\gw_w\}$ appears naturally in the study of group-theoretic
properties of iterations of the complex polynomial 
$z^2+i$ and the associated map on the moduli space of
4-punctured sphere, see~\cite{bartnek:rabbit}. It was for the first time
defined in~\cite{nek:ssfamilies}. Later, in~\cite{nek:nonuniform}, it
was shown that one of the groups in this family has non-uniform
exponential growth, and then it was used in~\cite{nek:dendrites} to
study the structure of the Julia set of an endomorphism of $\mathbb{CP}^2$.

We have tried to make these notes reasonably self-contained. 
Proofs of all results of~\cite{nek:ssfamilies} necessary for the main result (existence of three-generated
residually finite groups of uncountable genus) are included, except
for some facts that require straightforward computation. 

\subsection*{Acknowledgements} The author is grateful to 
A.~Myasnikov, L.~Pyber, 
A.~Reid, M.~Sapir, D.~Segal, and P.~Zalesski for discussions and remarks
on the topics of these notes. The author was supported by NSF grant DMS1006280.

\section{Definition of the family and its properties}

\subsection{Topological definition}
\label{ss:polynomials}

Let $f_1, f_2, \ldots$ be a sequence of complex polynomials, which we
arrange in an inverse sequence
\[\C\stackrel{f_1}{\longleftarrow}\C\stackrel{f_2}{\longleftarrow}\C\stackrel{f_3}{\longleftarrow}\cdots\]

Suppose that
for every $n\ge 0$ there exists a set $\{\A_n, \B_n, \G_n\}\subset\C$
of pairwise different numbers such that $\A_{n-1}$ is the critical value of $f_n$,
and
\[f_n(\A_n)=\B_{n-1},\quad f_n(\B_n)=\G_{n-1},\quad f_n(\G_n)=\B_{n-1},\]
for all $n\ge 1$, see Figure~\ref{fig:covering}. 
An example of such a sequence is the constant
sequence $f_n(z)=z^2+i$, for $\A_n=i$, $\B_n=i-1$, $\G_n=-i$.

\begin{figure}
\centering
\includegraphics{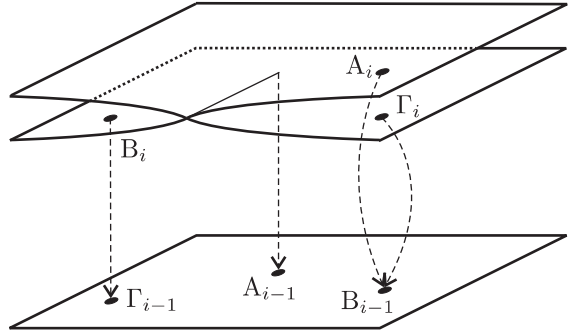}
\caption{Polynomials $f_i$}
\label{fig:covering}
\end{figure}

Denote $\M_n=\C\setminus\{\A_n, \B_n, \G_n\}$. The restriction $f_n:\M_n'\arr\M_{n-1}$, where
$\M_n'=f_n^{-1}(\M_{n-1})\subset\M_n$, is a degree two covering map.

Choose a basepoint $t\in\M_0$. The union of its backward images:
\[T_t=\{t\}\sqcup\bigsqcup_{n\ge 1}(f_1\circ f_2\circ\cdots\circ f_n)^{-1}(t)\]
has a natural structure of a rooted tree with the root $t$ in which a vertex
$z\in (f_1\circ f_2\circ\cdots\circ f_n)^{-1}(t)$ is connected to $f_n(z)$.

The fundamental group $\pi_1(\M_0, t)$ acts naturally
by the monodromy action on the levels $L_n=(f_1\circ\cdots\circ f_n)^{-1}(t)$ 
of the tree $T_t$. These actions are automorphisms of $T_t$. Denote by
$\img{f_1, f_2, \ldots}$ be the quotient of $\pi_1(\M_0, t)$ by the
kernel of the action. See more on the iterated monodromy groups of
this type in~\cite{nek:polynom}.

\subsection{Automorphisms of a rooted tree} Let us
give a more explicit description of the groups
$\img{f_1, f_2, \ldots}$ as groups acting on rooted
trees.

Let $\alb$ be a finite alphabet. Denote by $\xs$ the tree of finite
words over $\alb$. Two vertices of this tree are connected by an edge
if and only if they are of the form $v, vx$, for $v\in\xs$ and
$x\in\alb$. The root of the tree $\xs$ is the empty word $\emptyset$.

Denote by $\autxs$ the automorphism group of the rooted tree
$\xs$. Let $g\in\autxs$, and let $\pi\in\mathrm{Symm}(\alb)$ be the
permutation $g$ induces on the first level $\alb\subset\xs$ of the
tree. Define then automorphisms $g|_x\in\autxs$ for $x\in\alb$ by the
rule
\[g(xw)=g(x)g|_x(w)\]
for all $w\in\xs$. It is easy to see that
$g|_x$ are well defined elements of $\autxs$. The map
\[g\mapsto\pi\cdot (g|_x)_{x\in\alb}\in\mathrm{Symm}(\alb)\ltimes\autxs^\alb\]
is an isomorphism $\autxs\arr\mathrm{Symm}(\alb)\ltimes\autxs^\alb$ called the \emph{wreath
  recursion}.
We will identify the elements of $\autxs$ with their images in
$\mathrm{Symm}(\alb)\ltimes\autxs^\alb$ and write $g=\pi\cdot
(g|_x)_{x\in\alb}$.

Consider the case $\alb=\{0, 1\}$. Let us write the
elements of $\autxs^\alb$ as pairs $(h_0, h_1)$, and denote by
$\sigma$ the only non-trivial element of $\mathrm{Symm}(\alb)$. Then
the following equalities uniquely determine elements $g_0, g_1,
g_2\in\autxs$:
\[g_0=\sigma, \quad g_1=(g_0, g_2), \quad g_2=(1, g_1),\]
where an element of either $\mathrm{Symm}(\alb)$ or $\autxs^\alb$ is
not written if it is trivial.
See Figure~\ref{fig:portraits} for a graphical representations of the automorphisms
$g_0, g_1, g_2$.

\begin{figure}
\centering
\includegraphics{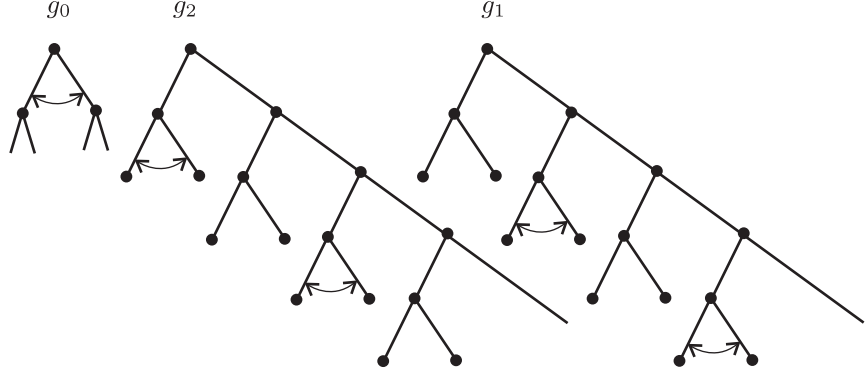}
\caption{Automorphisms $g_0$, $g_1$, $g_2$}
\label{fig:portraits}
\end{figure}

Let us modify the definition of the elements $g_0,
g_1, g_2$ by using either $g_2=(1, g_1)$ or $g_2=(g_1, 1)$ depending
on the level of the tree. More precisely, consider the
space $\xo$ of the right-infinite sequences $x_1x_2\ldots$ over the
alphabet $\alb=\{0, 1\}$ together with the shift map
$s(x_1x_2\ldots)=x_2x_3\ldots$, and define for every $w\in\xo$ the
automorphisms $\alpha_w, \beta_w, \gamma_w$ by the rules
\begin{eqnarray*}
\alpha_w &=& \sigma,\\
\beta_w &=& (\alpha_{s(w)}, \gamma_{s(w)}),\\
\gamma_w &=& \left\{\begin{array}{ll} (\beta_{s(w)}, 1) & \text{if the first
    letter of $w$ is $0$,}\\ (1, \beta_{s(w)}) & \text{if the first
    letter of $w$ is $1$.}\end{array}\right.
\end{eqnarray*}
Denote $\gw_w=\langle\alpha_w, \beta_w, \gamma_w\rangle$. It is easy
to see (e.g., using the criterion from~\cite{gns:conj2}) that $\beta_w$ and
$\gamma_w$ are (independently) conjugate to $g_1$ and $g_2$ (and, of
course, $\alpha_w=g_0$).

The following is shown in~\cite[Proposition~7.1]{nek:ssfamilies}.

\begin{proposition}
For every sequence of polynomials $f_1, f_2, \ldots$ as
in Subsection~\ref{ss:polynomials} there exists a sequence $w\in\xo$ such that
$\img{f_1, f_2, \ldots}$ is conjugate (as a group acting on a binary
rooted tree) to $\gw_w$. Conversely, for every sequence $w\in\xo$ there
exists a sequence of polynomials $f_1, f_2, \ldots$, satisfying the
conditions of~\ref{ss:polynomials} and such that $\img{f_1, f_2,
  \ldots}$ is conjugate to $\gw_w$.
\end{proposition}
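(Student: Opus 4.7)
The plan is to compute the wreath recursion of $\img{f_1,f_2,\ldots}$ on a natural set of three generators and identify it level-by-level with the defining recursion of $\alpha_w,\beta_w,\gamma_w$. Choose small loops $a,b,c\in\pi_1(\M_0,t)$ encircling $\A_0,\B_0,\G_0$ respectively, a labeling $t_0,t_1$ of $f_1^{-1}(t)$, and connecting paths inside $f_1^{-1}(\M_0)$ from $t$ to each $t_i$. The wreath recursion will express each generator as a permutation of $\{t_0,t_1\}$ together with a pair of states in $\pi_1(\M_1,t')$, where $t'$ is a chosen preimage of $t$; these states describe the action on the two subtrees, each of which is itself the iterated monodromy tree of the shifted sequence $f_2,f_3,\ldots$ based at $t'$.

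The one-step calculation now splits into three cases. Since $\A_0$ is the critical value of $f_1$, its unique preimage is the critical point $c_1$, which is distinct from $\A_1,\B_1,\G_1$ and therefore lies inside $\M_1$; so $a$ lifts to a single loop doubly encircling $c_1$ and contractible in $\M_1$. Hence $a=\sigma$ with trivial states, matching $\alpha_w=\sigma$. The point $\B_0$ has two regular preimages $\A_1$ and $\G_1$, both punctures of $\M_1$; so $b$ lifts to two small loops which, up to the chosen connecting paths, are the standard generators of $\pi_1(\M_1)$ around $\A_1$ and $\G_1$. Under the recursive identification with the iterated monodromy group of the shifted sequence, these correspond to the shifted analogues of $a$ and $c$, giving $b=(a^{(1)},c^{(1)})$, matching $\beta_w=(\alpha_{s(w)},\gamma_{s(w)})$. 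Finally, $\G_0$ has one distinguished preimage $\B_1$ and a second preimage $p_1$ which, being distinct from $\A_1,\B_1,\G_1$, also lies in $\M_1$; so $c$ lifts to a small loop around $\B_1$ (the shifted analogue of $b$) and a contractible loop around $p_1$. The combinatorial bit recording which preimage of $t$ carries the nontrivial state is declared to be $w_1\in\{0,1\}$, and this matches $\gamma_w=(\beta_{s(w)},1)$ or $(1,\beta_{s(w)})$.

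Iterating this computation at every level, with base points and labelings chosen consistently for the shifted sequences $f_{n+1},f_{n+2},\ldots$, yields a sequence $w=w_1w_2\ldots\in\xo$; by construction the wreath recursions of $a,b,c$ and of $\alpha_w,\beta_w,\gamma_w$ coincide, producing a conjugation in $\autxs$ between $\img{f_1,f_2,\ldots}$ and $\gw_w$. For the converse, given $w\in\xo$, construct polynomials $f_n$ inductively: choose distinct generic points $\A_n,\B_n,\G_n,c_n\in\C$ and let $f_n$ be the unique degree-two polynomial with critical point $c_n$, $f_n(c_n)=\A_{n-1}$, and $f_n(\A_n)=\B_{n-1}$. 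This forces $\G_n$ to be the other preimage of $\B_{n-1}$, while $\B_n$ is selected as the preimage of $\G_{n-1}$ whose position in the double cover is consistent with $w_n$. Choosing the labeling of preimages of the base points accordingly at each level completes the construction.

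The main obstacle is the bookkeeping in the wreath recursion: one must coherently choose base points and connecting paths at every level so that the states obtained from lifting $a,b,c$ are, up to the expected conjugation, exactly the standard generators of $\pi_1(\M_n)$ around $\A_n,\B_n,\G_n$, and not merely some other generating system. This is also the reason the conclusion is phrased up to conjugacy in $\autxs$ rather than strict equality.
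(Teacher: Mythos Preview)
Your forward direction is essentially the paper's approach: lift small loops around $\A_0,\B_0,\G_0$ through $f_1$ and read off the wreath recursion. The paper, however, resolves the ``bookkeeping obstacle'' you flag by a specific device: connect $\A_0,\B_0,\G_0$ to infinity by disjoint simple paths (a \emph{spider}), and at each step lift the spider through $f_n$ to get a new spider at level $n$. The lifts of the path $l_{\A_{n-1}}$ cut the plane into two halves $S_0,S_1$, with $\A_n\in S_0$ and $\G_n\in S_1$; this simultaneously fixes the labeling of the two sheets and the connecting paths to the new basepoint, so the states of the lifted generators are automatically the standard loops around $\A_n,\B_n,\G_n$. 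The bit $w_n$ is then not a free choice but the record of which half contains $\B_n$. Your sketch is correct in outline but leaves exactly this point unresolved.

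For the converse the two arguments diverge. The paper does \emph{not} build polynomials directly: it first constructs a sequence of degree-two topological branched coverings of the plane with the prescribed spider combinatorics (placing $\B_n$ in whichever half $w_n$ demands, which is unobstructed topologically), and then chooses a complex structure on the level-$0$ plane and pulls it back through $f_1\circ\cdots\circ f_n$; this makes each $f_n$ holomorphic, hence a polynomial. Your direct polynomial construction is plausible but incomplete: once the labeling is pinned down by the requirement $\beta=(\alpha^{(1)},\gamma^{(1)})$, you cannot ``choose the labeling accordingly''---you must instead argue that the two preimages of $\G_{n-1}$ lie in different halves $S_0,S_1$ (so that selecting $\B_n$ really toggles $w_n$), and that the resulting $\A_n,\B_n,\G_n$ stay pairwise distinct. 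Both are true, but they require the spider picture to even formulate, which brings you back to the paper's setup.
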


The idea of the proof is as follows. Connect the points $\A_0, \B_0,
\G_0$ by disjoint simple paths $l_{\A_0}, l_{\B_0}, l_{\G_0}$ to
infinity. Then define inductively paths $l_{\A_n}, l_{\B_n}, l_{\G_n}$
as lifts of the paths $l_{\B_{n-1}}$ and $l_{\G_{n-1}}$ connecting
$\A_n, \B_n, \G_n$ to infinity. The lifts of the path $l_{\A_{n-1}}$
separate the plane into two connected components, which we label by
$S_0$ and $S_1$, so that the first one contains $\A_n$, and the second one
contains $\G_n$, see Figure~\ref{fig:spiders}.
Then the vertices of the tree $T_t=\bigcup_{n\ge
  0}(f_1\circ\cdots\circ f_n)^{-1}(t)$ are labeled by words over $\{0,
1\}$ according to the itinerary with respect to the defined
partitions. Define the generators $\alpha, \beta, \gamma$ by small
simple loops around $\A_0, \B_0, \G_0$ connected to the basepoint by
paths disjoint with $l_{\A_0}, l_{\B_0}, l_{\G_0}$. It is easy to see
then that $\alpha=\alpha_w, \beta=\beta_w, \gamma=\gamma_w$, where $w$
is the sequence recording in which halves ($S_0$ or $S_1$) of the plane the points
$\B_n$ lie.

\begin{figure}
\centering
\includegraphics{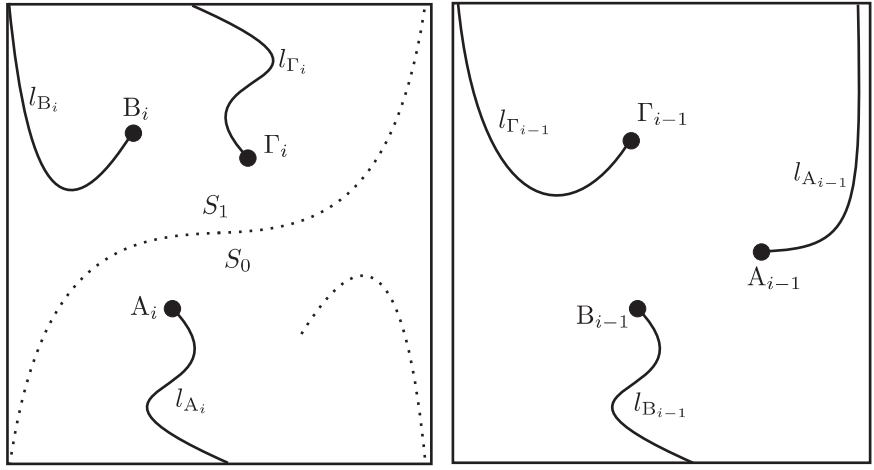}
\caption{Computation of $\img{f_1, f_2, \ldots}$}
\label{fig:spiders}
\end{figure}

In the other direction, for every given $w\in\xo$ 
it is easy to construct a sequence of
branched coverings $f_1, f_2, \ldots$ of plane such that $\img{f_1, f_2,
  \ldots}=\langle\alpha_w, \beta_w, \gamma_w\rangle$ (using the above
construction with paths). Then we can put a complex structure on the
first plane, and pull it back by the maps $f_1\circ\cdots\circ
f_n$. Then $f_n$ will become polynomials satisfying the conditions of
Subsection~\ref{ss:polynomials}.

\begin{proposition}
\label{pr:nonconjugate}
Let $w_1\ne w_2$ be elements of $\xo$. Then there does not exist
$g\in\autxs$ such that $\alpha_{w_1}=g^{-1}\alpha_{w_2}g$,
$\beta_{w_1}=g^{-1}\beta_{w_2}g$, and
$\gamma_{w_1}=g^{-1}\gamma_{w_2}g$.
\end{proposition}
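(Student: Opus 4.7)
The plan is to argue by induction on the position $n\ge 1$ at which $w_1$ and $w_2$ first disagree. Fix a hypothetical $g\in\autxs$ realizing the simultaneous conjugation, and write $g=\pi\cdot(g_0,g_1)$ under the wreath recursion. The relation $g\alpha_{w_1}=\alpha_{w_2}g$, with $\alpha_{w_1}=\alpha_{w_2}=\sigma$, expands immediately to $g_0=g_1$ irrespective of $\pi$; call this common section $h$.

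In the base case $n=1$, the first letters of $w_1,w_2$ differ; say $w_1$ begins with $0$ and $w_2$ with $1$, so that $\gamma_{w_1}=(\beta_{s(w_1)},1)$ and $\gamma_{w_2}=(1,\beta_{s(w_2)})$, while $\beta_{w_i}=(\sigma,\gamma_{s(w_i)})$. Writing out $g\gamma_{w_1}=\gamma_{w_2}g$ in the wreath recursion for each of $\pi=1$ and $\pi=\sigma$: the choice $\pi=1$ forces $\beta_{s(w_i)}=1$, which is impossible because each $\beta_{s(w_i)}$ is conjugate in $\autxs$ to the nontrivial element $g_1$. The choice $\pi=\sigma$ survives this test, but plugging it into $g\beta_{w_1}=\beta_{w_2}g$ yields $\gamma_{s(w_2)}=h\sigma h^{-1}$; this is a contradiction, since the right-hand side acts nontrivially on the first level of the tree while $\gamma_{s(w_2)}$ fixes the first level.

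For the inductive step $n\ge 2$, the first letters of $w_1$ and $w_2$ agree; by symmetry assume they are both $0$, so that $\gamma_{w_i}=(\beta_{s(w_i)},1)$ and $\beta_{w_i}=(\sigma,\gamma_{s(w_i)})$, all with trivial top-level permutation. Reading $g\gamma_{w_1}=\gamma_{w_2}g$ once more rules out $\pi=\sigma$ (it again forces $\beta_{s(w_i)}=1$) and, for $\pi=1$, produces $h\beta_{s(w_1)}h^{-1}=\beta_{s(w_2)}$. With $\pi=1$, the relation $g\beta_{w_1}=\beta_{w_2}g$ then gives both $h\sigma h^{-1}=\sigma$ (i.e.\ $h\alpha_{s(w_1)}h^{-1}=\alpha_{s(w_2)}$) and $h\gamma_{s(w_1)}h^{-1}=\gamma_{s(w_2)}$. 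Thus $h\in\autxs$ simultaneously conjugates $(\alpha_{s(w_1)},\beta_{s(w_1)},\gamma_{s(w_1)})$ to $(\alpha_{s(w_2)},\beta_{s(w_2)},\gamma_{s(w_2)})$. Since $s(w_1)$ and $s(w_2)$ first differ at position $n-1$, this contradicts the induction hypothesis.

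The only delicate point is the bookkeeping of sections under the wreath recursion; once the multiplication is done carefully, in each case the ``wrong'' value of $\pi$ collapses because it would demand that an element with trivial first-level action be conjugate to one with nontrivial first-level action, while the ``right'' value $\pi=1$ simply pushes the conjugation down one level and feeds the induction.
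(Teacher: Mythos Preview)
Your proof is correct and follows essentially the same strategy as the paper's: use the wreath recursion to analyse the conjugation relation level by level, pin down the first-level permutation $\pi$ of $g$, and reduce to the shifted sequences $s(w_1),s(w_2)$. The only cosmetic differences are the order of the deductions (you extract $g_0=g_1$ from the $\alpha$-relation at the outset, whereas the paper does this only in the inductive step) and, in that step, which generator is used to force $\pi=1$: you read it off the $\gamma$-relation, the paper off the $\beta$-relation, but both boil down to the same observation that an element fixing the first level cannot be conjugate to one that does not.
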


\begin{proof}
Suppose that $w_1$ starts with 0 and $w_2$ starts with 1.
Then $\gamma_{w_1}=(\beta_{s(w_1)}, 1)$, and $\gamma_{w_2}=(1,
\beta_{s(w_2)})$, therefore the conjugator $g$ must permute the
vertices of the first level. On the other hand,
$\beta_{w_1}=(\alpha_{s(w_1)}, \gamma_{s(w_1)})$ and
$\beta_{w_2}=(\alpha_{s(w_2)}, \gamma_{s(w_2)})$, and since
$\alpha_{s(w_1)}$ is not conjugate to $\gamma_{s(w_2)}$, we get a
contradiction.

Suppose that $w_1$ and $w_2$ start with the same letter, and let $g$
be as in the proposition. As in the previous paragraph, considering
$\beta_{w_1}$ and $\beta_{w_2}$ we conclude that $g$ acts trivially on
the first level, i.e., is of the form $g=(h_0, h_1)$ for some $h_0,
h_1\in\autxs$. The equality $\alpha_{w_1}=g^{-1}\alpha_{w_2}g$ implies
that $h_0=h_1$. It follows then that $h_0$ conjugates the triples
$(\alpha_{s(w_1)}, \beta_{s(w_1)}, \gamma_{s(w_1)})$ and
$(\alpha_{s(w_2)}, \beta_{s(w_2)}, \gamma_{s(w_2)})$, which will
eventually lead us to a contradiction.
\end{proof}

In fact, it is proved in~\cite[Proposition~3.1]{nek:ssfamilies} that
for any triple $(h_0, h_1, h_2)$ such that $h_0, h_1, h_2$ are
(independently) conjugate to $g_0, g_1, g_2$, respectively, there exists a
unique sequence $w\in\xo$ such that $(h_0, h_1, h_2)$ are
simultaneously conjugate to $(\alpha_w, \beta_w, \gamma_w)$. We proved
uniqueness in Proposition~\ref{pr:nonconjugate}.

\begin{corollary}
\label{cor:conjclasses}
For every given $w_0\in\xo$ the set of sequences $w\in\xo$ such that
the group $\gw_w$ is conjugate in
$\autxs$ to $\gw_{w_0}$ is at most countable.
\end{corollary}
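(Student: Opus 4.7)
The plan is to associate to each admissible $w$ a triple of elements of $\gw_{w_0}$, use the uniqueness portion of Proposition~\ref{pr:nonconjugate} to see that this assignment is injective, and conclude by counting: $\gw_{w_0}$ is finitely generated hence countable, so it has at most countably many triples.

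More concretely, for every $w \in \xo$ such that $\gw_w$ is conjugate to $\gw_{w_0}$ in $\autxs$, choose a conjugator $g_w \in \autxs$ with $g_w \gw_w g_w^{-1} = \gw_{w_0}$ and define
\[ T_w = \bigl( g_w \alpha_w g_w^{-1},\ g_w \beta_w g_w^{-1},\ g_w \gamma_w g_w^{-1} \bigr) \in \gw_{w_0}^3. \]
By construction $T_w$ is a triple of elements of $\gw_{w_0}$, and it is simultaneously conjugate in $\autxs$ (via $g_w^{-1}$) to $(\alpha_w, \beta_w, \gamma_w)$.

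To see that $w \mapsto T_w$ is injective, suppose $T_{w_1} = T_{w_2}$. Then the composition $g_{w_2}^{-1} g_{w_1}$ simultaneously conjugates $(\alpha_{w_1}, \beta_{w_1}, \gamma_{w_1})$ to $(\alpha_{w_2}, \beta_{w_2}, \gamma_{w_2})$. By Proposition~\ref{pr:nonconjugate} this forces $w_1 = w_2$. Since $\gw_{w_0}$ is 3-generated, it has countably many elements, hence countably many triples, so the set of such $w$ is at most countable.

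The only thing to verify carefully is that conjugation of $\gw_w$ to $\gw_{w_0}$ as a subgroup of $\autxs$ (as opposed to as an ordered triple of generators) still allows us to apply Proposition~\ref{pr:nonconjugate}; this is automatic because the invariant $T_w$ is built from the specific generators $\alpha_w, \beta_w, \gamma_w$ and lies in $\gw_{w_0}$ precisely by the group-level conjugacy. No additional obstacle should arise.
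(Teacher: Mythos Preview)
Your proof is correct and follows essentially the same approach as the paper: conjugate the canonical generators $(\alpha_w,\beta_w,\gamma_w)$ into $\gw_{w_0}$, use Proposition~\ref{pr:nonconjugate} to show that the resulting triple determines $w$, and count triples in the countable group $\gw_{w_0}$. The paper phrases the conclusion as a bound by the number of $3$-element generating sets of $\gw_{w_0}$, while you bound by all of $\gw_{w_0}^3$, but this is an inessential difference.
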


\begin{proof}
If $\gw_w$ is conjugate to $\gw_{w_0}$, then $\gw_{w_0}$ is generated
by $\alpha'=g^{-1}\alpha_wg$, $\beta'=g^{-1}\beta_wg$, and
$\gamma'=g^{-1}\gamma_wg$ for some $g\in\autxs$. 
By Proposition~\ref{pr:nonconjugate}, the sequence $w$ is uniquely
determined by $\alpha', \beta', \gamma'$. It follows that the
cardinality of the set of possible sequences $w$ is not greater than
the cardinality of the set of generating sets of size 3 of the group
$\gw_{w_0}$, which is at most countable.
\end{proof}

\subsection{A two-dimensional rational map}
\label{ss:ratmap}
Let $f_1, f_2, \ldots$, and $\A_n, \B_n, \G_n$ satisfy the conditions
of Subsection~\ref{ss:polynomials}.
Applying affine transformations $u_n:\C\arr\C$, and
replacing $f_n$ by $u_{n-1}^{-1}f_nu_n$, we may assume that $\A_n=0$,
$\B_n=1$. Denote $p_n=\G_n$. Since $0$ is the critical value of $f_n$,
we have $f_n(z)=(az+b)^2$ for some $a, b\in\C$. Since
$f_n(\A_n)=\B_{n-1}$, we have $b^2=1$, and we may assume that
$b=1$. Since $f_n(\G_n)=\B_{n-1}$, we get $(ap_n+1)^2=1$, hence
$ap_n+1=-1$ (since $\G_n\ne\A_n$). It follows that $a=-2/p_n$. Then
the condition $f_n(\B_n)=\G_{n-1}$ implies that
$p_{n-1}=(1-2/p_n)^2$. (These computations are from~\cite{bartnek:rabbit},
which is the main origin of the family of groups $\gw_w$.)

We arrive naturally to a map $F:\C^2\arr\C^2$ given by
\[F(z, p)=\left(\left(1-\frac{2z}{p}\right)^2,
  \left(1-\frac{2}{p}\right)^2\right).\]
For every sequence $p_n$ such that $p_{n-1}=(1-2/p_n)^2$, the sequence
of polynomials $f_n(z)=(1-2z/p_n)^2$ satisfies the conditions
of~\ref{ss:polynomials}, and conversely, every sequence satisfying the
conditions of~\ref{ss:polynomials} is conjugate to a sequence of this form.

The map $F$ has a natural extension to $\mathbb{CP}^2$ given by
\[F:[z:p:u]\mapsto [(p-2z)^2:(p-2u)^2:p^2].\]
The extended map $F$ is \emph{post-critically finite}: the union $P_F$
of the forward orbits of the critical values of $F$ is an algebraic
variety. It is equal to the union of the lines $z=0, p=0, u=0, z=p,
z=u, p=u$. It is interesting that the map $F$ appeared long ago
in~\cite{fornsibon:crfin} as an example of a post-critically finite
endomorphism of $\mathbb{CP}^2$.

Denote by $\M$ the complement of $P_F$ in
$\mathbb{CP}^2$. Then $F:F^{-1}(\M)\arr\M$ is a covering map of
topological degree 4, and $F^{-1}(\M)\subset\M$. Thus, we can define
its iterated monodromy group $\img{F}$. It is the quotient of the
fundamental group $\pi_1(\M, t)$ by the kernel of its natural action
on the tree $T_F=\bigsqcup_{n\ge 0}F^{-n}(t)$, where $t=(z_0, p_0)$ is
a basepoint.

The iterated monodromy group $\img{F}$ was studied
in~\cite{nek:dendrites}. It is shown there that a natural index two
extension $\wt\gw$ of $\img{F}$ (coming from considering complex conjugation as
a symmetry of $F$) wich is generated by the following six automorphisms of
the rooted tree $\{1, 2, 3, 4\}^*$:
\begin{alignat*}{2}
\alpha &= \sigma, & \quad a &= \pi,\\
\beta &= (\alpha, \gamma, \alpha, \gamma), & \quad b&=(a\alpha,
a\alpha, c, c),\\
\gamma &= (\beta, 1, 1, \beta), &\quad c&=(b\beta, b\beta, b, b),
\end{alignat*} 
where $\sigma=(12)(34), \pi=(13)(24)\in\mathrm{Symm}(\{1, 2, 3, 4\})$.
The iterated monodromy group $\img{F}$ is generated by $\alpha, \beta,
\gamma, ab, ac$.

The group $\wt\gw$ and its index two subgroup $\img{F}$ was used
in~\cite{nek:dendrites} to construct a combinatorial model of the Julia
set of $F$. We will not use the fact that $\wt\gw$ is an index two
extension of $\img{F}$. For us it is just the group generated by the
elements $\alpha, \beta, \gamma, a, b, c$ defined by the above recurrent
relations.

\subsection{Further properties of the groups $\gw_w$}
Let $\wt\gw=\langle\alpha, \beta, \gamma, a, b, c\rangle$, as 
defined in the previous subsection. Denote $\gw=\langle\alpha, \beta,
\gamma\rangle<\wt\gw$. It is the subgroup of $\img{F}$ defined by
loops inside the hyperplane $p=p_0$ (where $p_0$ is the second
coordinate of the basepoint $t=(z_0, p_0)\in\C^2$).

Consider the map $P:\{1, 2, 3, 4\}^*\arr\xs$ that applies to
each letter of a word $v\in\{1, 2, 3, 4\}^*$ the map
\[1\mapsto 0,\quad 2\mapsto 0,\quad 3\mapsto 1,\quad 4\mapsto 1.\]
It is easy to check that the map $P$ defines an imprimitivity system for
the group $\wt\gw$, i.e., that there exists an action of $\wt\gw$ on $\xs$
such that $g(P(v))=P(g(v))$ for all $v\in\{1, 2, 3, 4\}^*$. The action
of $\wt\gw$ on $\xs$ is uniquely determined by this condition.

Note that it follows directly from the recursive definition of the
generators $\alpha, \beta, \gamma, a, b, c$, that
$\gw$ belongs to the kernel of the action of $\wt\gw$ on
$\xs$, and that the action of $a, b, c$ on $\xs$ are defined by the
recurrent rules
\begin{equation}\label{eq:abc}
a=\sigma,\quad b=(a, c),\quad c=(b, b).
\end{equation}
Let $H$ be the quotient of $\wt\gw$ by the kernel of its action on
$\xs$.

Direct computations show that the following relations hold in
$\wt\gw$.
\begin{alignat*}{3}
\alpha^a&=\alpha, &\quad\alpha^b&=\alpha, &\quad \alpha^c&=\alpha,\\
\beta^a&=\beta, &\quad\beta^b&=\beta, &\quad\beta^c&=\beta^\gamma,\\
\gamma^a&=\gamma^\alpha, &\quad\gamma^b&=\gamma^\beta,&\quad\gamma^c&=\gamma,
\end{alignat*}
which implies that $\gw$ is a normal subgroup of $\wt\gw$
(see~\cite[Proposition~4.4]{nek:ssfamilies}). In fact, it is proved
in~\cite[Proposition~4.8]{nek:ssfamilies} (but we will not need it in
our paper) that $\gw$ coincides with
the kernel of the action of $\wt\gw$ on $\xs$, i.e., that $\wt\gw/\gw$
is the group $H$ generated by the elements $a, b, c$ defined
by~\eqref{eq:abc}.

For an infinite sequence $w=x_1x_2\ldots\in\xo$, denote by $T_w$ the
rooted subtree of $\{1, 2, 3, 4\}^*$ equal to the inverse image
$P^{-1}(\{\emptyset, x_1, x_1x_2, x_1x_2x_3, \ldots\})$ of the
corresponding path in $\xs$. It follows directly from the definition of the
generators $\alpha, \beta, \gamma$ of $\gw$:
\[\alpha=(12)(34),\quad\beta=(\alpha, \gamma, \alpha,
\gamma),\quad\gamma=(\beta, 1, 1, \beta),\]
that the tree $T_w$ is $\gw$-invariant, and that restriction of the
action of the generators $\alpha, \beta, \gamma$ onto $T_w$ coincides
with the action of the generators $\alpha_w, \beta_w, \gamma_w$ of
$\gw_w$, if we identify $T_w$ with $\xs$ by the map
\[1\mapsto 0,\quad 2\mapsto 1,\quad 3\mapsto 0,\quad 4\mapsto 1.\]

Consequently, the groups $\gw_w$ are obtained by restricting the
action of the group $\gw$ onto the subtrees $T_w$. We will denote the
canonical epimorphism $\gw\arr\gw_w$ by $P_w$.

For every $g\in\wt\gw$ we have $g(T_w)=T_{g(w)}$, just by the definition of
the action of $\wt\gw$ on $\xo$. Then the next lemma follows from the fact
that $\gw$ is normal in $\wt\gw$.

\begin{lemma}
\label{lem:congw}
For every $g\in\wt\gw$ the isomorphism
$g:T_w\arr T_{g(w)}$ conjugates the groups $\gw_w$ and $\gw_{g(w)}$.
\end{lemma}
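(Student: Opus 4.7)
The plan is to exploit the two ingredients that have just been assembled: the normality of $\gw$ inside $\wt\gw$, and the fact that the subtree $T_w$ is $\gw$-invariant with the restriction of $\gw$ to $T_w$ being exactly $\gw_w$ (via the identification of $T_w$ with $\xs$). Given these, the statement should reduce to a direct computation of conjugation followed by restriction.

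First I would fix $g\in\wt\gw$ and observe that since $g(T_w)=T_{g(w)}$, the automorphism $g$ of $\{1,2,3,4\}^*$ restricts to a rooted-tree isomorphism $\bar g\colon T_w\arr T_{g(w)}$. Second, for any $h\in\gw$, normality of $\gw$ in $\wt\gw$ gives $ghg^{-1}\in\gw$, so both $h$ and $ghg^{-1}$ preserve the $\gw$-invariant subtrees $T_w$ and $T_{g(w)}$ respectively. Third, for a vertex $v\in T_{g(w)}$, we have $\bar g^{-1}(v)\in T_w$, so the computation
\[(ghg^{-1})(v)=g\bigl(h(\bar g^{-1}(v))\bigr)=\bar g\bigl(h|_{T_w}(\bar g^{-1}(v))\bigr)\]
shows that the restriction of $ghg^{-1}$ to $T_{g(w)}$ is precisely $\bar g\circ h|_{T_w}\circ\bar g^{-1}$.

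Fourth, I would identify $T_w$ with $\xs$ and $T_{g(w)}$ with $\xs$ using the fixed labeling described just before the lemma, so that $h|_{T_w}$ is $P_w(h)\in\gw_w$ and $(ghg^{-1})|_{T_{g(w)}}$ is $P_{g(w)}(ghg^{-1})\in\gw_{g(w)}$. The displayed formula then reads
\[P_{g(w)}(ghg^{-1})=\bar g\,P_w(h)\,\bar g^{-1},\]
which says that conjugation by $\bar g$ takes the image of $\gw$ under $P_w$ onto the image of $\gw$ under $P_{g(w)}$, i.e., it conjugates $\gw_w$ onto $\gw_{g(w)}$. Since $\bar g$ is a bijective rooted-tree isomorphism, this conjugation map is an isomorphism of groups.

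There is no real obstacle here beyond bookkeeping: the only thing one must be careful about is that the restrictions $h\mapsto h|_{T_w}$ and $h\mapsto h|_{T_{g(w)}}$ are both well-defined group homomorphisms on $\gw$, which is precisely what the $\gw$-invariance of $T_w$ (and hence of $T_{g(w)}=g(T_w)$) guarantees. Once this is in place, normality of $\gw$ in $\wt\gw$ does the rest.
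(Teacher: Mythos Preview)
Your proof is correct and is precisely the argument the paper has in mind: the paper's entire proof is the sentence preceding the lemma, namely that the statement ``follows from the fact that $\gw$ is normal in $\wt\gw$'' together with $g(T_w)=T_{g(w)}$. You have simply written out in detail what that sentence means.
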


The group $\wt\gw$ is transitive on the first level of the tree $\{1,
2, 3, 4\}^*$, and the homomorphism from the stabilizer of the first
level to $\wt\gw$ given by $(g_1, g_2, g_3, g_4)\mapsto g_1$ is
onto. This implies that $\wt\gw$ is transitive on each level of the
tree (it is \emph{level-transitive}), see~\cite[Corollary~2.8.5]{nek:book}.
Consequently $H$ is also level-transitive.

We have finished a sketch of the proof of the following fact (see
also~\cite[Proposition~4.6]{nek:ssfamilies})

\begin{proposition}
\label{pr:conjugate}
If $w_1, w_2\in\xo$ belong to one $H$-orbit, then $\gw_{w_1}$ and
$\gw_{w_2}$ are conjugate.
\end{proposition}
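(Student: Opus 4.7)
The plan is to lift the $H$-action on $\xo$ to the $\wt\gw$-action and then apply Lemma~\ref{lem:congw}. Recall that $H$ is, by construction, the quotient of $\wt\gw$ by the kernel of its action on $\xs$, so the action of $\wt\gw$ on $\xs$ (and consequently on the boundary $\xo$) factors through $H$. Therefore, if $w_1$ and $w_2$ lie in the same $H$-orbit and $h \in H$ satisfies $h(w_1) = w_2$, one can choose any preimage $g \in \wt\gw$ of $h$, and this $g$ will satisfy $g(w_1) = w_2$ in the action of $\wt\gw$ on $\xo$.

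Once such a $g$ is fixed, Lemma~\ref{lem:congw} gives an isomorphism of rooted trees $g : T_{w_1} \arr T_{g(w_1)} = T_{w_2}$ that conjugates the restricted action of $\gw_{w_1}$ on $T_{w_1}$ to the restricted action of $\gw_{w_2}$ on $T_{w_2}$. Under the canonical identifications $T_{w_i} \cong \xs$ described in the previous subsection (via $1,3 \mapsto 0$ and $2,4 \mapsto 1$), these restricted actions are exactly the actions of $\gw_{w_1}$ and $\gw_{w_2}$ as groups of automorphisms of $\xs$. Transporting $g$ through these identifications therefore produces an element of $\autxs$ that conjugates $\gw_{w_1}$ onto $\gw_{w_2}$, which is the desired conclusion.

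The only genuine point to check, and the main place where care is required, is that the identifications $T_{w_1} \cong \xs$ and $T_{w_2} \cong \xs$ are compatible with the tree isomorphism induced by $g$; that is, that the resulting bijection $\xs \arr \xs$ is actually an automorphism of the rooted tree $\xs$. This is immediate from the fact that $g$, being an element of $\autxs[\{1,2,3,4\}]$ that preserves the $P$-fibers (since $\wt\gw$ admits $P$ as an imprimitivity system), induces a well-defined tree automorphism on the quotient by $P$, and the identifications $T_{w_i} \cong \xs$ are just restrictions of this $P$-quotient map to the subtrees $T_{w_i}$. Hence everything is consistent, and the proof is essentially a bookkeeping argument on top of Lemma~\ref{lem:congw}.
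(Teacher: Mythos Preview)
Your approach is the same as the paper's: lift from $H$ to $\wt\gw$ and invoke Lemma~\ref{lem:congw}. The first two paragraphs are correct and coincide with the paper's sketch preceding the proposition.

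Your final paragraph, however, contains a confusion. The identification $T_{w_i}\cong\xs$ is \emph{not} given by the map $P$ (which sends $1,2\mapsto 0$ and $3,4\mapsto 1$); it is given by the \emph{other} map $1,3\mapsto 0$, $2,4\mapsto 1$, exactly as you wrote in your second paragraph. Restricting $P$ to $T_{w_i}$ collapses the entire subtree onto the single path $\{\emptyset,x_1,x_1x_2,\ldots\}$ in $\xs$, so it is far from a bijection with $\xs$, and your argument about ``restrictions of the $P$-quotient map'' does not make sense. The point you are trying to verify is actually immediate without any reference to $P$: since $g\in\wt\gw$ is an automorphism of the rooted tree $\{1,2,3,4\}^*$ carrying the rooted subtree $T_{w_1}$ onto $T_{w_2}$, its restriction is automatically an isomorphism of rooted trees, and composing with the two fixed identifications $T_{w_i}\cong\xs$ (themselves rooted-tree isomorphisms) yields an element of $\autxs$. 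The paper does not pause on this, treating it as implicit in the statement of Lemma~\ref{lem:congw}.
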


In fact, the converse is also true: if the groups $\gw_{w_1}$ and
$\gw_{w_2}$ are conjugate, then $w_1$ and $w_2$ belong to one
$H$-orbit, see~\cite[Theorem~5.1]{nek:ssfamilies}. 

Note that since the action of $H$ on $\xs$ is level-transitive, for
every $w\in\xo$ the set of sequences $w'\in\xo$ such that $\gw_w$ is
conjugate to $\gw_{w'}$ is dense in $\xo$. It also follows that
closures of $\gw_w$ in the profinite group $\autxs$ do not depend, up
to isomorphism, on $w$.

Recall, that for $g\in\autxs$ and $x\in\alb$ we denote by $g|_x$ the
element of $\autxs$ defined by the condition $g(xu)=g(x)g|_x(u)$. We use
a similar definition of arbitrary words. Namely, for $v\in\xs$ and
$g\in\autxs$, denote by $g|_v$ the automorphism of $\xs$ defined by
the condition that
\[g(vu)=g(v)g|_v(u)\]
for all $u\in\xs$. The element $g|_v$ is called the \emph{section} of
$g$ at $v$, and it can be computed by repeated application of the wreath
recursion, since it satisfies
\[g|_{x_1x_2\ldots x_n}=g|_{x_1}|_{x_2}\ldots|_{x_n}.\]

The following statement is proved by direct computation
(see~\cite[Proposition~4.2]{nek:ssfamilies}).

\begin{proposition}
\label{pr:contracting}
The subgroups $\langle\alpha, \beta\rangle$, $\langle\alpha,
\gamma\rangle$, and $\langle\beta, \gamma\rangle$ of $\gw$ are isomorphic to
dihedral groups of order 16, 8, and 16, respectively. Denote their
union by $\nuke$. For every $g\in\gw$ there exists $n$ such that
$g|_v\in\nuke$ for all words $v$ of length at least $n$.
\end{proposition}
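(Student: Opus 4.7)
The plan is to address the two parts of the proposition separately. For the dihedral structure, first observe that $\alpha, \beta, \gamma$ are involutions: by induction on the level, $\alpha^2 = \sigma^2 = 1$, $\beta^2 = (\alpha^2, \gamma^2, \alpha^2, \gamma^2)$, and $\gamma^2 = (\beta^2, 1, 1, \beta^2)$. Each pairwise subgroup $\langle s, t\rangle$ is therefore a dihedral quotient whose order equals twice the order of $st$. Direct wreath-recursion calculations give
\[
(\alpha\gamma)^2 = (\beta, \beta, \beta, \beta), \qquad (\alpha\beta)^2 = (\gamma\alpha, \alpha\gamma, \gamma\alpha, \alpha\gamma),
\]
so $\alpha\gamma$ has order $4$ (since $\beta \ne 1$) and consequently $\alpha\beta$ has order $8$, yielding $|\langle\alpha, \gamma\rangle| = 8$ and $|\langle\alpha, \beta\rangle| = 16$.

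The order of $\beta\gamma$ is subtler because its natural power recursion is self-referential. Computing $\beta\gamma = (\alpha\beta, \gamma, \alpha, \gamma\beta)$ and $\gamma\beta = (\beta\alpha, \gamma, \alpha, \beta\gamma)$, and setting $h := (\beta\gamma)^8$ and $h' := (\gamma\beta)^8$, one obtains $h = (1, 1, 1, h')$ and $h' = (1, 1, 1, h)$. An induction on vertex length then forces $h = h' = 1$: both act trivially on the first level and have trivial sections at $1, 2, 3$, so they fix any vertex containing a non-$4$ letter; vertices of the form $4^k$ are fixed because the first-level action is trivial throughout. Combined with the lower bound $(\beta\gamma)|_1 = \alpha\beta$ of order $8$, this gives the order of $\beta\gamma$ as exactly $8$ and $|\langle\beta, \gamma\rangle| = 16$.

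For the contraction statement, the approach is to exhibit $\nuke$ as a genuine nucleus for $\gw$. Two finite verifications are needed: (i) $\nuke$ is closed under taking sections---using $(gh)|_x = g|_{h(x)} h|_x$ it is enough to check this on a generating set of each dihedral subgroup and extend multiplicatively; (ii) for every pair $g_1, g_2 \in \nuke$ there is a uniform depth $d_0$ such that $(g_1 g_2)|_v \in \nuke$ for every $|v| \ge d_0$, which is a finite enumeration since $\nuke\cdot\nuke$ is finite. Given (i) and (ii), the contraction follows by induction on the word length of $g$ in $\{\alpha, \beta, \gamma\}$: factor $g = g_1 g_2$ with $|g_i| < |g|$; the inductive hypothesis provides a depth beyond which both $g_1|_v$ and $g_2|_v$ lie in $\nuke$, so $g|_v = g_1|_{g_2(v)} g_2|_v \in \nuke\cdot\nuke$, which by (ii) returns to $\nuke$ after $d_0$ further levels. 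The principal obstacle is precisely (ii): since a product of two nucleus elements need not itself lie in $\nuke$, one must compute sections of each element of $\nuke\cdot\nuke$ explicitly and bound $d_0$ uniformly---routine in principle but combinatorially heavy, and most cleanly executed via explicit tabulation or a small computer calculation.
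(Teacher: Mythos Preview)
Your argument is correct and follows the same outline the paper indicates: the dihedral orders are obtained by the recursions you wrote down, and contraction is proved by exhibiting $\nuke$ as a nucleus. Two small remarks. First, your justification of (i) (``check on generators and extend multiplicatively'') needs a word of care, since $\nuke$ is not a group: what makes the extension work is that $\beta$ and $\gamma$ act trivially on the first level, so at each fixed vertex $x$ the first--level sections of the generators of each dihedral subgroup already lie in a \emph{single} dihedral subgroup of $\nuke$ (e.g.\ sections of $\langle\alpha,\beta\rangle$ land in $\langle\alpha,\gamma\rangle$; sections of $\langle\beta,\gamma\rangle$ at vertex~$1$ land in $\langle\alpha,\beta\rangle$, at vertex~$4$ in $\langle\beta,\gamma\rangle$, etc.), whence products stay there. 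Second, the paper's hint is slightly more economical than your step (ii): instead of verifying that all of $\nuke\cdot\nuke$ contracts back to $\nuke$, it suffices to check $\nuke\cdot\{\alpha,\beta,\gamma\}$, because one can run the induction by appending one generator at a time (sections of a single generator always lie in $\{1,\alpha,\beta,\gamma\}$). This cuts the finite enumeration roughly by a factor of $|\nuke|/3$, but the underlying idea is the same.
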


In order to prove the second statement of the proposition one shows
that the sections of elements of $\nuke\cdot\{\alpha, \beta, \gamma\}$ at long
enough words belong to $\nuke$.

The next statement is also checked directly, see~\cite[Proposition~4.3]{nek:ssfamilies}.

\begin{lemma}
The canonical epimorphism $P_w:\gw\arr\gw_w$ is injective on $\nuke$
for all $w$ different from $111\ldots$. 

The image of
$\beta_{111\ldots}\gamma_{111\ldots}$ has order 2 in $\gw_{111\ldots}$.
\end{lemma}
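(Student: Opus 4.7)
The plan is to analyze the three dihedral subgroups $\langle\alpha,\beta\rangle$, $\langle\alpha,\gamma\rangle$, $\langle\beta,\gamma\rangle$ of $\nuke$ one at a time. Since each has known order ($16$, $8$, $16$) and $P_w$ restricted to any of them surjects onto a dihedral quotient, injectivity is equivalent to preserving the order of the product of its two generators. So the first assertion reduces to showing that $\alpha_w\beta_w$, $\alpha_w\gamma_w$, $\beta_w\gamma_w$ retain their orders $8$, $4$, $8$ in $\gw_w$. First I would verify that $\alpha_w,\beta_w,\gamma_w$ are genuine nontrivial involutions, using that they are images of involutions in $\gw$ and that $\sigma$ appears as some section, so none can collapse to the identity.

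Applying the wreath recursion, I compute $(\alpha_w\gamma_w)^2=(\beta_{s(w)},\beta_{s(w)})$, a nontrivial involution, so $\alpha_w\gamma_w$ always has order $4$. Then $(\alpha_w\beta_w)^2=(\gamma_{s(w)}\alpha_{s(w)},\alpha_{s(w)}\gamma_{s(w)})$, whose square involves the nontrivial $(\alpha_{s(w)}\gamma_{s(w)})^2$, so $\alpha_w\beta_w$ always has order $8$. For the product $\beta_w\gamma_w$ I split by the first letter of $w$: if $w$ starts with $0$, then $\beta_w\gamma_w=(\alpha_{s(w)}\beta_{s(w)},\gamma_{s(w)})$ has order $\operatorname{lcm}(8,2)=8$; if $w$ starts with $1$, then $\beta_w\gamma_w=(\alpha_{s(w)},\gamma_{s(w)}\beta_{s(w)})$ has order $\operatorname{lcm}(2,\operatorname{ord}(\beta_{s(w)}\gamma_{s(w)}))$. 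Iterating the shift until a $0$ appears---which happens exactly when $w\neq 111\ldots$---a downward induction gives $\beta_w\gamma_w$ order $8$, proving the first statement.

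For the remaining case $w=111\ldots$ the self-referential recursion gives $(\beta_w\gamma_w)^2=(1,(\gamma_w\beta_w)^2)$ and $(\gamma_w\beta_w)^2=(1,(\beta_w\gamma_w)^2)$, so setting $u=(\beta_w\gamma_w)^2$ yields the fixed-point equation $u=(1,(1,u))$. The delicate step, which I regard as the main obstacle, is observing that every section $u|_v$ of a solution to this equation has trivial top permutation, forcing $u=1$ by the uniqueness of a tree automorphism determined by its vertex permutations. Given this, $\beta_w\gamma_w$ has order at most $2$, and exactly $2$ because $\beta_w\neq\gamma_w$ (indeed $\beta_w|_0=\sigma\neq 1=\gamma_w|_0$), establishing the second statement.
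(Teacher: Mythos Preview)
Your approach---direct computation of orders via the wreath recursion---is exactly what the paper indicates (it says only that the lemma ``is checked directly'' and refers to~\cite{nek:ssfamilies}), and your computations for the orders of $\alpha_w\beta_w$, $\alpha_w\gamma_w$, $\beta_w\gamma_w$ and for the fixed-point argument when $w=111\ldots$ are all correct.

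There is one small gap in your reduction step. Showing that each of $\langle\alpha_w,\beta_w\rangle$, $\langle\alpha_w,\gamma_w\rangle$, $\langle\beta_w,\gamma_w\rangle$ has the same order as its preimage establishes that $P_w$ is injective on each of the three dihedral subgroups separately; but $\nuke$ is their \emph{union}, not a subgroup, and injectivity on each piece does not formally imply injectivity on the union. One must also rule out collisions $P_w(g)=P_w(h)$ with $g$ and $h$ lying in different dihedral pieces, which amounts to checking that the pairwise intersections of the image dihedral groups do not grow (for instance, that $\langle\alpha_w,\beta_w\rangle\cap\langle\alpha_w,\gamma_w\rangle=\langle\alpha_w\rangle$). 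This is another short wreath-recursion computation---e.g., comparing the level-two sections of the central involutions $(\alpha_w\beta_w)^4$ and $(\alpha_w\gamma_w)^2$---but it is not covered by your order argument alone. That said, the only consequence the paper actually uses (in the proof of Proposition~\ref{pr:relations}) is that no nontrivial element of $\nuke$ lies in $\ker P_w$; this weaker statement \emph{does} follow immediately from injectivity on each dihedral piece, so your argument already suffices for the paper's needs.
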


Denote by $\xo_0$ the set of all sequences $w\in\xo$ with infinitely
many zeros.

\begin{proposition}
\label{pr:relations}
Let $w\in\xo_0$, and let $A$ and $B$ be finite subsets of $\gw$ such that
$P_w(g)=1$ for all $g\in A$ and $P_w(g)\ne 1$ for all $g\in B$. Then
there exists $n$ such that if $w'\in\xo_0$ has the same beginning of
length $n$ as $w$, then $P_{w'}(g)=1$ for all $g\in A$ and
$P_{w'}(g)\ne 1$ for all $g\in B$.
\end{proposition}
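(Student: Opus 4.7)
The plan is to reduce both conditions in the statement to ones depending only on a finite initial segment of $w$, using the contracting property (Proposition~\ref{pr:contracting}) together with the injectivity of $P_w$ on $\nuke$ from the preceding lemma. The $B$-side is immediate: if $P_w(g)\ne 1$ then $g$ moves some vertex of $T_w$ at a finite level $m_g$, and since the truncation of $T_w$ to level $m_g$ depends only on $x_1\ldots x_{m_g}$, the same vertex lies in $T_{w'}$ and is moved by $g$ whenever $w'$ shares its first $m_g$ letters with $w$.

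The heart of the argument is the $A$-side. For each $g\in A$, I use Proposition~\ref{pr:contracting} to choose $n_g$ with $g|_u\in\nuke$ for every word $u$ of length at least $n_g$. Since $P_w(g)=1$, the element $g$ fixes every $v\in T_w$ of length $n_g$, and its section $g|_v$ acts trivially on the subtree of $T_w$ rooted at $v$, which is naturally identified with $T_{s^{n_g}(w)}$; equivalently $P_{s^{n_g}(w)}(g|_v)=1$. Because $w\in\xo_0$ we have $s^{n_g}(w)\ne 111\ldots$, so by the preceding lemma $P_{s^{n_g}(w)}$ is injective on $\nuke$, forcing $g|_v=1$ as an automorphism of the full $4$-ary tree. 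Thus the infinite condition ``$P_w(g)=1$'' reduces to the purely finite certificate that $g$ fixes the first $n_g$ levels of $T_w$ and $g|_v=1$ for every $v\in T_w$ at level $n_g$---data determined by $x_1\ldots x_{n_g}$.

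To finish, set $n=\max\bigl(\{n_g:g\in A\}\cup\{m_g:g\in B\}\bigr)$. For any $w'\in\xo_0$ agreeing with $w$ in its first $n$ letters, $T_{w'}$ and $T_w$ coincide up to level $n$. For $g\in A$, the finite certificate transfers verbatim, and since $g|_v=1$ holds as a global tree automorphism (not merely modulo $\nuke$), $g$ fixes the $4$-ary subtree below every level-$n_g$ vertex of $T_{w'}$; combined with triviality on the first $n_g$ levels this yields $P_{w'}(g)=1$. For $g\in B$ the claim was already established in the first paragraph. The only delicate step is the $A$-side: the contracting property converts an a priori infinite condition into a finite one and $\nuke$-injectivity rigidifies it. The hypothesis $w\in\xo_0$ enters precisely here, to keep each tail $s^{n_g}(w)$ away from the exceptional sequence $111\ldots$ on which $\nuke$-injectivity fails.
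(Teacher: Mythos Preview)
Your proof is correct and follows essentially the same approach as the paper: use the contracting property (Proposition~\ref{pr:contracting}) to reduce to sections in $\nuke$, then invoke the injectivity of $P_{s^n(w)}$ on $\nuke$ for tails $s^n(w)\ne 111\ldots$ to turn an infinite condition into a finite one depending only on an initial segment of $w$. The only cosmetic difference is organizational: the paper treats the $A$- and $B$-sides simultaneously via a single ``$P_{w_1}(g)=1\Leftrightarrow P_{w_2}(g)=1$'' statement using the contracting level $n$, whereas you handle $B$ directly by locating a moved vertex at some finite level $m_g$ and reserve the contracting argument for $A$; both routes are equally valid.
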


In other words, the map $w\mapsto\langle\alpha_w, \beta_w,
\gamma_w\rangle$ is a continuous map from $\xo_0$ to the space of
three-generated groups. 

\begin{proof}
It is enough to prove that for every $g\in\gw$ 
there exists $n$ such that if the length of the common beginning of
sequences $w_1, w_2\in\xo_0$ is at least $n$, then 
$P_{w_1}(g)=1$ if and only if $P_{w_2}(g)=1$.

Let us apply the wreath recursion defining $\gw_w$ several
times to $g$. By Proposition~\ref{pr:contracting}, there exists $n$ such that
all sections $g|_v$ belong to $\nuke$ for all words $v$ of length at
least $n$. If the action of $g$ is non-trivial on the $n$th level of
the tree $T_{w_1}$, then $P_{w_1}(g)\ne 1$. Suppose that the action is
trivial. Then $P_{w_1}(g)=1$ if and only if all sections
$P_{w_1}(g)|_{P_{w_1}(v)}=P_{s^n(w_1)}(g|_v)$ for $v\in
T_{w_1}\cap\{1, 2, 3, 4\}^n$ are trivial. But $g|_v\in\nuke$, and if
$s^n(w_1)\ne 111\ldots$, then $g|_v\ne 1$ if and only if
$P_{s^n(w_1)}(g|_v)\ne 1$. It follows that triviality of $P_{w_1}(g)$
depends only on the first $n$ letters of $w_1$, provided $s^n(w_1)\ne
111\ldots$.
\end{proof}

\section{Profinite completion of $\gw_w$}

\begin{defi}
Let $G$ be a group acting on a rooted tree $\xs$. The \emph{$n$th
  level rigid stabilizer} $\rist_n(G)$ is the group generated by
elements $g\in G$ such that $g$ acts trivially on the $n$th level
$\alb^n$ of $\xs$, and the sections $g|_v$ are 
trivial for all words $v\in\alb^n$ except for one.
\end{defi}

Let us denote $\rist_{n, w}=\rist_n(\gw_w)$. 
Denote by $L_w$ the  smallest normal subgroup $L_w$ of $\gw_w$ containing
$\{[\alpha_w, \beta_w], [\gamma_w, \beta_w]\}$. 
Direct computations
(see~\cite[Proposition~5.14]{nek:ssfamilies}) show
that $L_w$ (more formally, its image under the wreath recursion)
contains $L_{s(w)}\times L_{s(w)}$. This implies that the direct product $L_{s^n(w)}^{\alb^n}$
is contained in $\rist_{n, w}$. Moreover, it is also checked directly
that index of $L_w$ in $\gw_w$ is finite. It
follows that $\rist_{n, w}$ is of finite index in $\gw_w$.

\begin{defi}
A group acting on rooted tree is \emph{branch} if $\rist_n(G)$ is of
finite index in $G$ for all $n$. It is called \emph{weakly branch} if
  $\rist_n(G)$ are infinite for all $n$.
\end{defi}

See~\cite{handbook:branch} for more on branch groups.

The following theorem is proved in~\cite{lavnek} (see
also~\cite[Proposition~2.10.7]{nek:book}).

\begin{theorem}
Let $G_1, G_2\le\autxs$ be weakly branch groups, and let $\psi:G_1\arr
G_2$ be an isomorphism. Suppose that there exist subgroups $H_n\le
G_1$ for all $n\ge 1$ such that the following conditions hold
\begin{enumerate}
\item $H_n$ and $\psi(H_n)$ belong to the stabilizer of the $n$th
  level;
\item the groups $H_n$ and $\psi(H_n)$ act level-transitively on the
  rooted subtrees $v\xs\subset\xs$ for $v\in\alb^n$.
\end{enumerate}
Then the isomorphism $\psi$ is induced by conjugation in $\autxs$.
\end{theorem}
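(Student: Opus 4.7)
I would construct the conjugator $g\in\autxs$ as the inverse limit of a coherent family of bijections $g_n\colon\alb^n\arr\alb^n$, and then verify $\psi(h)=ghg^{-1}$ by showing both sides induce the same permutation of every $\alb^n$ and invoking the faithfulness of the action of $\autxs$ on $\xs$.

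For each vertex $v\in\alb^n$, let $R_v$ denote the rigid vertex stabiliser in $G_1$, i.e.\ the subgroup of $\rist_n(G_1)$ consisting of elements whose sections at all $v'\in\alb^n\setminus\{v\}$ are trivial. Because $G_1$ is weakly branch, each $R_v$ is infinite, and the $R_v$ for different $v$ pairwise commute. The images $\psi(R_v)$ in $G_2$ therefore form an analogous commuting collection of infinite subgroups. The crux of the argument is to assign to each $v$ a unique vertex $g_n(v)\in\alb^n$ such that the entire action of $\psi(R_v)$ on $\xs$ is supported inside $g_n(v)\xs$. For this I would use the hypothesis on $\psi(H_n)$: it lies in the $n$th level stabilizer and acts level-transitively on each subtree $u\xs$, so it supplies enough elements inside every subtree independently to pin down supports. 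Combining this with the commutation pattern rules out that $\psi(R_v)$ can spread across two distinct subtrees, giving a single $g_n(v)$. The symmetric argument applied to $\psi^{-1}$ shows $g_n$ is a bijection.

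Coherence across levels, i.e.\ that $g_{n+1}$ extends $g_n$ in the tree-automorphism sense, follows from the inclusion $R_{vx}\subseteq R_v$: the level-$(n+1)$ support description must refine the level-$n$ one. This yields a well-defined $g\in\autxs$ with $g(v)=g_n(v)$ for every $v\in\alb^n$. To finish, for any $h\in G_1$ and $v\in\alb^n$ one has $hR_vh^{-1}=R_{h(v)}$; applying $\psi$ gives $\psi(h)\psi(R_v)\psi(h)^{-1}=\psi(R_{h(v)})$, and reading off supports on both sides yields $\psi(h)(g_n(v))=g_n(h(v))$, i.e.\ $\psi(h)$ and $ghg^{-1}$ coincide on every level.

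The hardest part is the support-localisation step. Without the calibrating subgroups $H_n$ and $\psi(H_n)$, an abstract isomorphism between weakly branch groups could in principle mix the supports of the $R_v$'s in a coordinated way that the commutation lattice alone cannot detect. The $H_n$ hypothesis is exactly what converts the algebraic commutation data into a geometric partition $\xs=\bigsqcup_{v\in\alb^n}v\xs$ preserved by $\psi$, which is what makes the vertex-by-vertex identification well defined.
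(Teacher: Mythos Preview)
The paper itself does not prove this theorem; it is quoted from~\cite{lavnek} (with a pointer to~\cite[Proposition~2.10.7]{nek:book}) and then applied. So there is no in-paper argument to compare against. That said, your outline is precisely the strategy of the cited proof: manufacture the conjugator as an inverse limit of bijections $g_n\colon\alb^n\to\alb^n$ by tracking where $\psi$ sends the rigid vertex stabilisers $R_v$, use $R_{vx}\le R_v$ for coherence, and read off $\psi(h)(g_n(v))=g_n(h(v))$ from $hR_vh^{-1}=R_{h(v)}$.

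The only place your sketch is thinner than the cited argument is the support-localisation step, which you correctly flag as the crux. Two things are worth making explicit. First, it is not given that $\psi(R_v)$ lies in the $n$th level stabiliser of $G_2$; this must be extracted from the hypotheses. Second, the mechanism by which the hypotheses combine is: since $H_n$ fixes level~$n$ pointwise it normalises each $R_v$, so $\psi(H_n)$ normalises $\psi(R_v)$; level-transitivity of $\psi(H_n)$ on each $u\xs$ then forces the set of vertices of $u\xs$ moved by $\psi(R_v)$ to be either empty or all of $u\xs$ below $u$. Weak branchness of $G_2$ now enters: the rigid stabiliser of $u$ in $G_2$ has trivial centraliser inside that subtree, so if $\psi(R_v)$ acts nontrivially throughout $u\xs$ then nothing commuting with $\psi(R_v)$ can also act nontrivially on $u\xs$. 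Hence the supports of the $\psi(R_v)$ are pairwise disjoint unions of level-$n$ subtrees, and since there are $|\alb|^n$ of each, every $\psi(R_v)$ is supported on exactly one subtree. Your phrase ``combining this with the commutation pattern'' packages all of this; in a full proof it should be unpacked.
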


It is easy to produce subgroups $H_n$ of $\gw_w$ satisfying the
conditions of the above theorem. For example, one can take $H_1$ to be
the group generated by squares of elements $\gw_w$, and then define
inductively $H_n$ as the group generated by squares of the elements of
$H_{n-1}$, see~\cite[Section~6]{nek:ssfamilies}.

It follows that the isomorphism relation between groups $\gw_w$
coincides with conjugacy. In particular, it follows from
Corollary~\ref{cor:conjclasses} that the isomorphism classes in the
family $\gw_w$ are countable. More precise information
gives~\cite[Theorem~6.1]{nek:ssfamilies}: two groups $\gw_{w_1}$ and
$\gw_{w_2}$ are isomorphic if and only if the sequences $w_1$ and
$w_2$ are co-final, i.e., are of the form $w_1=v_1w$ and $w_2=v_2w$, where
the words $v_1, v_2\in\xs$ have equal length.

The following theorem is proved in~\cite[Theorem~5.2 and
Lemma~5.3]{handbook:branch}.

\begin{theorem}
Let $G$ be a level-transitive subgroup of $\autxs$. For every
non-trivial normal subgroup $N$ of $G$ there exists $n$ such that
$N$ contains the commutator subgroup $\rist_n(G)'$ of $\rist_n(G)$.
\end{theorem}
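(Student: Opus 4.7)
The plan is the standard commutator trick that shows a nontrivial normal subgroup of a branch-like group must swallow the commutator subgroup of some layer's rigid stabilizer. First, I would pick any $g \in N$ with $g \neq 1$. Since $g$ acts nontrivially on $\xs$, there is a level $n$ and a vertex $v \in \alb^n$ with $v' := g(v) \neq v$. I intend to prove the stronger local statement that $N$ contains the commutator subgroup of the ``single-vertex'' rigid stabilizer at $v$, and then spread it to all vertices at level $n$ by level-transitivity.

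For each $w \in \alb^n$ let $R_w$ denote the subgroup of $\rist_n(G)$ consisting of elements whose only possibly nontrivial section at level $n$ sits at $w$. Elements of $R_w$ and $R_{w''}$ commute whenever $w \neq w''$ because they act on disjoint subtrees, so $\rist_n(G)$ is the internal direct product $\prod_{w \in \alb^n} R_w$, and hence $\rist_n(G)' = \prod_w R_w'$. Because $G$ is level-transitive, each $R_w$ is a $G$-conjugate of $R_v$, so by normality of $N$ it suffices to show $R_v' \subseteq N$.

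The main computation is the following: for $h_1, h_2 \in R_v$, the conjugate $g h_1 g^{-1}$ lies in $R_{v'}$ (since $g$ carries the subtree rooted at $v$ bijectively onto the one rooted at $v'$, and $g h_1 g^{-1} \in G$), hence it commutes with $h_2 \in R_v$. Writing $[g, h_1] = (g h_1 g^{-1})\, h_1^{-1}$ and using the identity $[ab, c] = [a,c]^b[b,c]$, the first factor drops out and we obtain
\[
[[g, h_1], h_2] = [h_1^{-1}, h_2].
\]
The left-hand side lies in $N$ because $N$ is normal in $G$, so $[h_1^{-1}, h_2] \in N$ for all $h_1, h_2 \in R_v$; this forces $R_v' \subseteq N$, which combined with the previous paragraph yields $\rist_n(G)' \subseteq N$.

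The only nonformal ingredient is the two-step commutator identification in the third paragraph; everything else is bookkeeping with supports, the direct-product decomposition of $\rist_n(G)$, and level-transitivity. The one pitfall worth flagging is the claim that $g h_1 g^{-1}$ really sits in $R_{v'}$ rather than in some finer rigid stabilizer: this uses that tree automorphisms send subtrees bijectively onto subtrees and that $R_v$ was defined as a subgroup of $G$, so $G$-conjugation stays inside a single-vertex rigid stabilizer of the same level.
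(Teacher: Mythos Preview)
Your argument is correct and is precisely the standard ``commutator trick'' proof. Note that the paper itself does not prove this theorem: it is quoted from~\cite[Theorem~5.2 and Lemma~5.3]{handbook:branch}, and the argument you give is essentially the one found there (and throughout the branch-group literature).

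One cosmetic point: with the convention $[x,y]=xyx^{-1}y^{-1}$ that you use when writing $[g,h_1]=(gh_1g^{-1})h_1^{-1}$, the relevant identity is $[ab,c]={}^{a}[b,c]\,[a,c]$ rather than $[ab,c]=[a,c]^{b}[b,c]$. This does not affect your conclusion, since $a=gh_1g^{-1}\in R_{v'}$ commutes with $[h_1^{-1},h_2]\in R_v$, so the conjugation by $a$ is trivial and you still obtain $[[g,h_1],h_2]=[h_1^{-1},h_2]\in N$ exactly as claimed.
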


It is not hard to show that $\rist_{n, w}'$ has finite index in
$\gw_w$ (see~\cite[Proposition~5.15]{nek:ssfamilies}). In particular,
this shows that the groups $\gw_w$ are just-infinite, i.e., that all
their non-trivial normal subgroups have finite index. (Note that there
is a typo in~\cite[Proposition~5.15]{nek:ssfamilies}: the word
``normal'' is missing.)

\begin{theorem}
\label{th:main}
The profinite completion $\widehat{\gw_w}$ 
of $\gw_w$ does not depend on $w$, if $w$ has
infinitely many zeros.
\end{theorem}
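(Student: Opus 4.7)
The plan is to invoke Corollary~3.2.8 of~\cite{ribeszalesskii:book} and show that for $w_1,w_2\in\xo_0$ the groups $\gw_{w_1}$ and $\gw_{w_2}$ have the same set of isomorphism classes of finite quotients. Fix a finite group $Q$ and set
$$S_Q=\{w\in\xo_0:Q\text{ is a quotient of }\gw_w\}.$$
The goal is then to show that $S_Q$ is either empty or all of $\xo_0$, from which the theorem follows.

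Since $\gw$ is $3$-generated, there are only finitely many homomorphisms $\pi:\gw\to Q$, and $S_Q=\bigcup_\pi T_\pi$ taken over the surjective ones, where $T_\pi=\{w\in\xo_0:\ker P_w\subseteq\ker\pi\}$. I would first show that each $T_\pi$ is closed. Proposition~\ref{pr:relations} applied with $A=\emptyset$ and $B=\{g\}$ gives that $\{w\in\xo_0:P_w(g)\ne 1\}$ is clopen for each $g\in\gw$, and then
$$T_\pi=\bigcap_{g\in\gw\setminus\ker\pi}\{w\in\xo_0:P_w(g)\ne 1\}$$
is a countable intersection of clopen sets, hence closed. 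Thus $S_Q$ is a finite union of closed sets, and is closed.

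Next I would show that $S_Q$ is $H$-invariant. The key point is that $H$ preserves $\xo_0$: each of the generators $a=\sigma$, $b=(a,c)$, $c=(b,b)$ of $H$ alters at most one coordinate of any sequence in $\xo$. Indeed, $a$ flips the first letter; $b(0w)=0\cdot a(w)$ changes a single letter; the relations $b(1w)=1\cdot c(w)$ and $c(xw)=x\cdot b(w)$ push the action down the tree, and the only sequence on which this recursion never terminates, namely $1^\infty$, is fixed since $b(1^\infty)=c(1^\infty)=1^\infty$. Hence any element of $H$, being a word of length $k$ in $a,b,c$, alters at most $k$ coordinates of any input, so $H$ preserves $\xo_0$. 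By Lemma~\ref{lem:congw}, if $w\in S_Q$ and $h\in H$ then $\gw_{h(w)}$ is conjugate, hence isomorphic, to $\gw_w$, so $h(w)\in S_Q$.

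To conclude, since $H$ is level-transitive on $\xs$, every $H$-orbit is dense in $\xo$; combined with $H\cdot w\subseteq\xo_0$ and the density of $\xo_0$ in $\xo$, this makes every $H$-orbit dense in $\xo_0$. If $S_Q$ is non-empty, any $w_1\in S_Q$ has $H\cdot w_1\subseteq S_Q$ dense in $\xo_0$, forcing $S_Q=\xo_0$ by the closedness established above. The hard step will be the bounded-activity claim for $H$: once the recursive verification of single-letter activity for the generators $b$ and $c$ is in hand, the rest is a purely topological clopen/density argument resting on Proposition~\ref{pr:relations} and Lemma~\ref{lem:congw}.
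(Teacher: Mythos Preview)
Your argument is correct and takes a genuinely different route from the paper's. The paper reduces to Proposition~\ref{pr:gwrist}, showing that the specific quotients $\gw_w/\rist_{n,w}'$ are independent of $w$; this uses the just-infinite branch structure (every proper quotient factors through some $\gw_w/\rist_{n,w}'$), then a local-constancy argument via Proposition~\ref{pr:relations}, a minimality trick on the order of the quotient, and finally density of $H$-orbits together with Lemma~\ref{lem:congw}. You bypass the rigid-stabilizer machinery entirely: for each finite $Q$ you show directly that $S_Q$ is closed and $H$-invariant in $\xo_0$, and conclude by density. This is more economical and makes explicit a point the paper glosses over, namely that $H$ preserves $\xo_0$ (the paper's last step in Proposition~\ref{pr:gwrist} implicitly needs $g(w')\in\xo_0$). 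What the paper's approach buys is the extra structural statement that the concrete quotients $\gw_w/\rist_{n,w}'$ themselves are independent of $w$, not merely the collection of all finite quotients.

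One small imprecision: you write that $1^\infty$ is ``the only sequence on which this recursion never terminates''. In fact the $b$-recursion fails to terminate exactly on sequences whose odd-indexed letters are all $1$ (e.g.\ $101010\ldots$), since from state $b$ one passes to $c$ only on reading a $1$, and $c$ passes unconditionally to $b$. This does not affect your conclusion: whenever the recursion fails to terminate the sequence is fixed, so each of $a,b,c$ still alters at most one coordinate, and $H$ preserves $\xo_0$ as claimed.
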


\begin{proof}
By~\cite[Corollary~3.2.8]{ribeszalesskii:book}, it is enough to prove that
the sets of all finite quotients of $\gw_w$ do not depend on
$w\in\xo_0$.
Since a group is a proper quotient of $\gw_w$ if and
only if it is a quotient of $\gw_w/\rist_{n, w}'$ for some $n$, it is enough to
prove Proposition~\ref{pr:gwrist} below.
\end{proof}

\begin{proposition}
\label{pr:gwrist}
The quotient $\gw_w/\rist_{n, w}'$ does not depend, up to
isomorphism, on $w\in\xo_0$.
\end{proposition}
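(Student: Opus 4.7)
The strategy combines level-transitivity of $H$ on $\xs$ (which makes every $H$-orbit dense in $\xo$) with a local constancy property of the map $w\mapsto\gw_w/\rist_{n,w}'$ on $\xo_0$, exploiting that the quotient is preserved under conjugation inside an $H$-orbit.

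\emph{Step 1: invariance under $H$.} If $w_2=h(w_1)$ for some $h\in\wt\gw$, then by Lemma~\ref{lem:congw} the rooted-tree isomorphism $T_{w_1}\to T_{w_2}$ induced by $h$ realizes a conjugation of $\gw_{w_1}$ and $\gw_{w_2}$ in $\autxs$ (after the standard identifications of $T_{w_1},T_{w_2}$ with $\xs$). This conjugator preserves levels of $\xs$, so it sends $\rist_{n,w_1}$ onto $\rist_{n,w_2}$ and hence $\rist_{n,w_1}'$ onto $\rist_{n,w_2}'$, inducing the desired isomorphism $\gw_{w_1}/\rist_{n,w_1}'\cong\gw_{w_2}/\rist_{n,w_2}'$.

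\emph{Step 2: local constancy.} I claim that for every $n$ there exists $N=N(n)$ such that, for $w_1,w_2\in\xo_0$ sharing a prefix of length $N$, there is an isomorphism of the quotients carrying the image of each generator of $\gw_{w_1}$ to the same-named generator of $\gw_{w_2}$. Letting $K_{n,w}\le F_3$ denote the kernel of $F_3\twoheadrightarrow\gw_w\twoheadrightarrow\gw_w/\rist_{n,w}'$, it suffices to show $K_{n,w}$ is determined by the first $N$ letters of $w$. Using the wreath recursion, $g\in\rist_{n,w}$ iff $g$ fixes the $n$-th level of $T_w$ and all but one level-$n$ section is trivial; the first condition depends only on the first $n$ letters of $w$, and triviality of each section $g|_v\in\gw_{s^n(w)}$ is controlled by Proposition~\ref{pr:relations} applied to $s^n(w)\in\xo_0$, invoking the contraction from Proposition~\ref{pr:contracting} to reduce to finitely many section types in $\nuke$. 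Passing from $\rist_{n,w}$ to its derived subgroup, and from individual membership questions to the whole kernel $K_{n,w}$, requires a uniform bound on $|\gw_w/\rist_{n,w}'|$ for $w\in\xo_0$; this is obtained from $L_{s^n(w)}^{\alb^n}\subseteq\rist_{n,w}$ together with uniform boundedness of $[\gw_w:L_w]$ across $w$.

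\emph{Step 3: conclusion.} Given any $w_1,w_2\in\xo_0$ and $n$, pick $N$ as above. Since $H$ acts level-transitively on $\xs$, there is $h\in H$ such that $h(w_1)$ agrees with $w_2$ on the first $N$ letters. Step~1 gives $\gw_{w_1}/\rist_{n,w_1}'\cong\gw_{h(w_1)}/\rist_{n,h(w_1)}'$ and Step~2 gives $\gw_{h(w_1)}/\rist_{n,h(w_1)}'\cong\gw_{w_2}/\rist_{n,w_2}'$, so the quotients for $w_1$ and $w_2$ are isomorphic. The main obstacle is Step~2: while continuity for $\gw_w$ itself is Proposition~\ref{pr:relations}, lifting it to the derived rigid stabilizer demands a uniform index bound, which in turn rests on the recursive product structure of $L_w$.
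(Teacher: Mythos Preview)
Your overall architecture matches the paper: both arguments use the conjugation from Lemma~\ref{lem:congw} (your Step~1) together with the density of $H$-orbits (your Step~3). The divergence is in Step~2.

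The paper does \emph{not} prove local constancy of $w\mapsto \gw_w/\rist_{n,w}'$. It proves only a one-sided statement: for fixed $w\in\xo_0$ there is a neighborhood in which $\gw_{w'}/\rist_{n,w'}'$ is a \emph{quotient} of $\gw_w/\rist_{n,w}'$. Concretely, one lifts a finite generating set of $\rist_{n,w}'$ (written as commutators of single-support elements) and a coset transversal to $\gw$, encodes the multiplication table of $\gw_w/\rist_{n,w}'$ as finitely many relations in $\gw$, and invokes Proposition~\ref{pr:relations} to see that these relations persist for nearby $w'$; the map $P_w(g)\mapsto P_{w'}(g)$ on the transversal then descends to a surjection. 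Next one simply chooses $w$ so that $|\gw_w/\rist_{n,w}'|$ is \emph{minimal} among all $w\in\xo_0$; the surjection is then forced to be an isomorphism on a neighborhood of this particular $w$, and density of $H$-orbits plus Step~1 propagates the isomorphism everywhere.

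Your Step~2 aims for something stronger and has a genuine gap. First, the uniform bound on $[\gw_w:L_w]$ is not established in the paper (only finiteness for each $w$ is asserted). More importantly, even granting a uniform bound on $|\gw_w/\rist_{n,w}'|$, this alone does not give local constancy of the kernel $K_{n,w}\le F_3$. The difficulty is that Proposition~\ref{pr:relations} controls, for fixed $g\in\gw$, whether $P_w(g)=1$; it does not directly control whether $P_w(g)\in\rist_{n,w}'$, since the latter is not an equation in $\gw_w$ and the target subgroup $\rist_{n,w}'$ itself varies with $w$. One can push through the inclusion direction (chosen commutators remain in $\rist_{n,w'}'$ for nearby $w'$), which is exactly the semi-continuity the paper proves, but the reverse containment needed for your equality $K_{n,w_1}=K_{n,w_2}$ is not supplied by your argument. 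The paper's minimum-order trick is precisely the device that substitutes for this missing direction. (A small side remark: your clause ``$g\in\rist_{n,w}$ iff $g$ fixes the $n$th level and all but one section is trivial'' describes the \emph{generators} of $\rist_{n,w}$, not its general elements.)
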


\begin{proof} Fix a sequence $w\in\xo_0$.
The subgroups $\rist_{n, w}$ and $\rist_{n, w}'$ have
  finite index in $\gw_w$, hence they are finitely generated.

Let $[P_w(g_1), P_w(h_1)], [P_w(g_2), P_w(h_2)], \ldots, [P_w(g_k),
P_w(h_k)]$ be a finite
generating set of $\rist_{n, w}'$, where $g_i, h_i\in\gw$ are such
that $P_w(g_i), P_w(h_i)\in\rist_{n,
  w}$. Moreover, we may assume that for every $i$ 
all sections of $P_w(g_i)$ and $P_w(h_i)$ at words of length $n$ are trivial 
except for one word $v_i$. It follows from
Proposition~\ref{pr:relations} that there exists $m_1\ge n$ such that if
$w'\in\xo_0$ is such that $w$ and $w'$ have a common beginning of
length at least $m_1$, then $P_{w'}(g_i), P_{w'}(h_i)\in\rist_{n, w'}$
for all $i=1, \ldots, k$. It follows that $P_{w'}([g_i,
h_i])\in\rist_{n, w'}'$ for all $i=1, \ldots, k$.

Let $A=\{1, a_1, a_2, \ldots, a_m\}\subset\gw$ be such that $P_w(A)$ is
a coset transversal
of $\gw_w$ modulo $\rist_{n, w}'$. Consider the
multiplication table for $\gw_w/\rist_{n, w}'$, and write its entries as
relations of the form $P_w(a_ia_ja_k^{-1}r_{i, j})=1$, where $r_{i,
  j}$ is a product of
the generators $P_w([g_i, h_i])$ of $\rist_{n, w}'$. 
We get a finite set of
relations, hence, by Proposition~\ref{pr:relations}, there exists
$m_2\ge m_1$
such that if $w'\in\xo_0$ has a common beginning with $w$ of length at
least $m_2$, then the corresponding
relations $P_{w'}(a_ia_ja_k^{-1}r)$ also hold in $\gw_{w'}$. It follows that
$\gw_{w'}/\rist_{n, w'}'$ is a homomorphic image of $\gw_w/\rist_{n,
  w}$, where the homomorphism is induced by the map
\begin{equation}\label{eq:Pw}
P_w(g)\mapsto P_{w'}(g),\qquad g\in A.
\end{equation}

Choose now $w\in\xo_0$ such that $\gw_w/\rist_{n, w}'$ has the smallest
possible order. Then for all $w'$ with a long enough common beginning
with $w$ the group
$\gw_{w'}/\rist_{n, w'}'$ is isomorphic to $\gw_w/\rist_{n, w}'$. But the
orbits of $H$ on $\xo$ are dense, hence for every $w'\in\xo_0$
there exists $g\in\wt\gw$ such that $g(w')$ is arbitrarily close to
$w$, i.e., has arbitrarily long common beginning with $w$.
By Lemma~\ref{lem:congw}, $\gw_{w'}/\rist_{n, w'}'$ is isomorphic to
$\gw_{g(w')}/\rist_{n, g(w')}'$, which finishes the proof.
\end{proof}

\section{Word growth}

Let $G$ be a finitely generated group. For a finite generating set
$S$, the \emph{word growth function} if the function $r_{G, S}(n)$
equal to the number of elements of $G$ that can be
represented as products of length at most $n$ of the generators $s\in
S$ and their inverses. 

For two non-decreasing positive functions $r_1,
r_2:\mathbb{N}\arr\mathbb{R}$, we write $r_1\prec r_2$ if there exists
a constant $C>1$ such that \[r_1(n)\le Cr_2(C(n))\] for all
$n\in\mathbb{N}$. We say that $r_1$ and $r_2$ are \emph{equivalent} if
$r_1\prec r_2$ and $r_2\prec r_1$.

The \emph{growth type} of a finite generated group $G$ is the
equivalence class of its word growth function. It is easy to prove that the
equivalence class of the growth function $r_{G, S}(n)$ does not
depend on the choice of the generating set $S$. See~\cite{mann:growth}
for a survey of results on word growth of groups.

It is a consequence of Gromov's theorem on
groups of polynomial growth~\cite{gro:gr} and the formula for degree
of polynomial growth of a virtually nilpotent group~\cite{bass:nilpotent} 
that if $G_1$ and $G_2$ are finitely generated residually finite
groups such that $\widehat{G_1}\cong\widehat{G_2}$, and $G_1$ is of
polynomial growth, then growth types of $G_1$ and $G_2$ are the
same. In other words, the growth type of groups of polynomial growth
is a profinite property. In general, however, we can not reconstruct
the growth type of a group from its profinite completion.

\begin{proposition}
The set of growth types of groups $\{\gw_w\;:\;w\in\xo_0\}$ is
uncountable. In particular, growth type of a finitely generated
residually finite group is not a profinite property.
\end{proposition}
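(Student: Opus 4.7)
The plan is to adapt Grigorchuk's classical technique for producing uncountably many growth types in a self-similar family, using the self-similarity of the groups $\gw_w$ together with Propositions~\ref{pr:contracting} and~\ref{pr:relations}. The first ingredient is a length-contraction estimate: since the generators of $\gw_w$ have sections governed by the finite nucleus $\nuke$, standard arguments yield constants $\lambda<1$ and $C\geq 0$ such that for every $g\in\gw_w$ of generator-length $\ell$, the total generator-length of the first-level sections $g|_0,g|_1\in\gw_{s(w)}$ is at most $\lambda\ell+C$. Iterating gives an analogous bound at level $k$, with an effective contraction ratio depending on the prefix $x_1\cdots x_k$ of $w$.

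Second, I would exhibit two sequences $u,v\in\xo_0$ for which $\gw_u$ and $\gw_v$ have non-equivalent growth functions. Because the recursion
\[\beta_w=(\alpha_{s(w)},\gamma_{s(w)}),\qquad \gamma_w=(\beta_{s(w)},1)\text{ or }(1,\beta_{s(w)}),\]
treats the letters $0$ and $1$ of $w$ asymmetrically, the effective contraction rate is sensitive to the letter statistics of $w$. For an upper bound one uses the contraction estimate above; for a lower bound one uses the fact that $\rist_{n,w}$ contains the large direct products $L_{s^n(w)}^{\alb^n}$ noted in the discussion preceding Theorem~\ref{th:main}. Concretely, comparing a periodic sequence such as $u=(01)^\infty$ against a sequence $v$ with sparse ones should produce a quantitative gap between $r_{\gw_u,S}$ and $r_{\gw_v,S}$.

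For the main construction, for each $\tau\in\{0,1\}^\omega$ I would form $w_\tau\in\xo_0$ by concatenating very long blocks drawn alternately from $u$ and $v$ according to the bits of $\tau$, letting the block lengths grow fast enough relative to the contraction ratio so that each block's ``signature'' propagates up to the full growth function of $\gw_{w_\tau}$. Proposition~\ref{pr:relations} ensures that each initial segment of $w_\tau$ fully determines the finite-level behavior of $\gw_{w_\tau}$ needed for the estimate. Then each growth type corresponds to at most countably many $\tau$, and the continuum cardinality of $\{0,1\}^\omega$ forces uncountably many distinct growth types in the family. The ``in particular'' statement is then immediate from Theorem~\ref{th:main}.

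The main obstacle I expect is the final comparison: one must verify that the growth signatures of different $\tau$ are stable under the coarse equivalence $\prec$, which demands sharp two-sided growth bounds controlled by the letter patterns of $w$. This is the technical heart of Grigorchuk's original argument, and adapting it here requires careful quantitative control of both the nucleus-based contraction estimate and the branch-subgroup lower bound inside the specific recursion defining $\alpha_w,\beta_w,\gamma_w$.
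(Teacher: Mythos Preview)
Your overall architecture matches the paper's: both use Proposition~\ref{pr:relations} (continuity of $w\mapsto\gw_w$ on $\xo_0$) together with Grigorchuk's block-interleaving technique from~\cite{grigorchuk:growth_en}, and both deduce the ``in particular'' clause from Theorem~\ref{th:main}. The divergence is in how one obtains the two distinct growth regimes that the block argument needs as input.

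Here your step~2 has a genuine gap. The recursion for $\gamma_w$ is $(\beta_{s(w)},1)$ or $(1,\beta_{s(w)})$; in either case the \emph{multiset} of first-level sections of each generator is the same, so the naive sum-of-section-lengths contraction constant you call $\lambda$ is independent of $w$, and the branch lower bound coming from $L_{s^n(w)}^{\alb^n}\subset\rist_{n,w}$ is likewise uniform in $w$. The letter of $w$ only shifts the \emph{position} of $\beta_{s(w)}$, which can affect cancellation patterns in longer products, but turning that into a provable separation of growth \emph{types} between, say, $(01)^\infty$ and a sequence with sparse ones is precisely the hard step, and nothing in your outline carries it out. Your honest closing paragraph already flags this as ``the technical heart''; in fact it is not a technicality but the entire content.

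The paper does not attempt to extract this from the recursion directly. It imports two external theorems: the closure of $\{\gw_w:w\in\xo\}$ in the space of three-generated groups contains a group of \emph{exponential} growth (established in~\cite{nek:nonuniform}), while $\gw_{000\ldots}\cong\img{z^2+i}$ has \emph{intermediate} growth by the Bux--P\'erez result~\cite{buxperez:imgi}. With a sub-exponential and an exponential accumulation point in hand, the interleaving argument from~\cite{grigorchuk:growth_en} goes through verbatim. Your contraction heuristic cannot replace these inputs: establishing sub-exponential growth for even one $\gw_w$ is exactly the content of~\cite{buxperez:imgi}, and without it the block construction only yields exponential upper bounds that do not distinguish growth types.
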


The first statement of the proposition
was proved in~\cite[Corollary~4.7]{nek:nonuniform}. The proof
uses the fact that the closure of the set $\{\gw_w\;:\;w\in\xo\}$ in
the space of three-generated groups contains a group of exponential
and a group of intermediate growth. The latter is 
$\gw_{000\ldots}\cong\img{z^2+i}$, sub-exponential growth of which was
proved in~\cite{buxperez:imgi}. One can use these facts and
Proposition~\ref{pr:relations} to prove, in the same way as it is done
for the family of Grigorchuk groups in~\cite{grigorchuk:growth_en}, that
the set of growth types of groups $\gw_w$ is uncountable.

\providecommand{\bysame}{\leavevmode\hbox to3em{\hrulefill}\thinspace}
\providecommand{\MR}{\relax\ifhmode\unskip\space\fi MR }
\providecommand{\MRhref}[2]{%
  \href{http://www.ams.org/mathscinet-getitem?mr=#1}{#2}
}
\providecommand{\href}[2]{#2}

\end{document}